\def\NAT@def@citea{\def\@citea{\NAT@separator}}
\theoremstyle{plain}
\newtheorem{theorem}{Theorem}[section]
\newtheorem{corollary}[theorem]{Corollary}
\theoremstyle{definition}
\theoremstyle{remark}
\newtheorem{remark}{Remark}
\begin{document}


\title{An Extension of The First Eigen-type Ambarzumyan theorem}

\author{
\name{Alp Arslan K\i ra\c{c}\textsuperscript{a}\thanks{CONTACT Alp Arslan K\i ra\c{c}. Email: aakirac@pau.edu.tr}}
\affil{\textsuperscript{a}Department of Mathematics, Faculty of Arts and Sciences,
Pamukkale University, 20070, Denizli, Turkey }}

\maketitle

\begin{abstract}
An extension of the first eigenvalue-type Ambarzumyan's theorem are provided for the arbitrary  self-adjoint Sturm-Liouville differential operators. The result makes a contribution to the P{\"{o}}schel-Trubowitz inverse spectral theory as well. 
\end{abstract}

\begin{keywords}
Sturm-Liouville differential operators; Ambarzumyan's theorem; inverse spectral theory

\end{keywords}

\section{Introduction}

In $1929$, Ambarzumyan \cite{Ambarz} proved that if  $\{(n\pi)^{2}: n\in \mathbb{N}\cup \{0\}\}$ is the spectrum of the boundary value problem
 \begin{equation}  \label{1}
-y^{\prime\prime}(x)+q(x)y(x)=\lambda y(x),\qquad y^{\prime}(0)=y^{\prime}(1)=0
\end{equation}
with real potential $q\in L^{1}(0,1)$, then $q=0$ a.e. Clearly, if $q=0$ a.e., then the eigenvalues $\lambda_{n}=(n\pi)^{2}$, $n\in\mathbb{N}\cup \{0\}$.

We note that in Ambarzumyan's theorem the whole spectrum is specified. But then Freiling and Yurko \cite{FreilingYurko} proved that it is enough to specify only the first eigenvalue. More precisely, they proved  the following first eigenvalue-type of Ambarzumyan theorem:

\begin{theorem}\label{yurk1}
If $\lambda_{0}=\int_{0}^{1}q(x)\,dx$, then $q(x)=\lambda_{0}$ a.e.
\end{theorem}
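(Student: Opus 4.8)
The plan is to exploit the variational (Rayleigh--Ritz) characterization of the least eigenvalue. Since the boundary conditions $y'(0)=y'(1)=0$ are of Neumann type, they are \emph{natural} for the associated quadratic form, so the form domain is all of $H^{1}(0,1)$ with no imposed boundary constraint. Accordingly I would begin from the identity
\[
\lambda_{0}=\min_{0\neq y\in H^{1}(0,1)}\frac{\int_{0}^{1}\bigl(y'(x)^{2}+q(x)\,y(x)^{2}\bigr)\,dx}{\int_{0}^{1}y(x)^{2}\,dx},
\]
recalling that $\lambda_{0}$ is simple and that the minimum is attained precisely on the (one-dimensional) eigenspace belonging to $\lambda_{0}$.

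The decisive step is to test this quotient against the constant function $y\equiv 1$. Because its derivative vanishes, the numerator collapses to $\int_{0}^{1}q(x)\,dx$ while the denominator equals $1$, so the Rayleigh quotient at $y\equiv 1$ is exactly $\int_{0}^{1}q(x)\,dx$. This already yields the unconditional inequality $\lambda_{0}\le\int_{0}^{1}q(x)\,dx$. Under the hypothesis $\lambda_{0}=\int_{0}^{1}q(x)\,dx$ equality holds, which forces the constant function to realize the minimum; hence $y\equiv 1$ must be an eigenfunction associated with $\lambda_{0}$.

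Finally I would substitute this eigenfunction back into the differential equation: inserting $y\equiv 1$ into $-y''+qy=\lambda_{0}y$ gives $q(x)=\lambda_{0}$ for a.e.\ $x\in(0,1)$, which is the claim. The only point requiring genuine care is the passage from ``$y\equiv 1$ minimizes the Rayleigh quotient'' to ``$y\equiv 1$ is an eigenfunction'': for merely $q\in L^{1}(0,1)$ one must first verify, via a Sobolev interpolation estimate $\|y\|_{\infty}^{2}\le\varepsilon\|y'\|_{2}^{2}+C_{\varepsilon}\|y\|_{2}^{2}$, that the form is bounded below and closed so that a minimizer exists and satisfies the weak Euler--Lagrange equation, and then that the $L^{1}$-regularity of $q$ still permits recovering the pointwise (a.e.) identity from the weak form. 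I expect this regularity and minimizer-characterization issue, rather than the short computation itself, to be the main technical obstacle.
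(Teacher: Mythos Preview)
Your argument is correct and is exactly the variational route the paper alludes to: the paper does not give its own proof of this statement but quotes it from Freiling--Yurko and remarks, regarding the generalization (Theorem~\ref{yurkofirst}), that ``the proof of this theorem is based on the well-known variational principle of the smallest eigenvalue.'' Your test with $y\equiv 1$, equality in the Rayleigh quotient, and substitution back into the equation is precisely that standard argument, and the technical caveat you flag (closedness and lower-boundedness of the form for $q\in L^{1}$ via the one-dimensional embedding $H^{1}\hookrightarrow L^{\infty}$) is the right thing to check and goes through.
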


Consider the boundary value problems $L(q)$ generated in the space $L^{2}(0, 1)$ by the following differential equation
\begin{equation}\label{q-per}
-y^{\prime\prime}(x)+q(x)y(x)=\lambda y(x)
\end{equation}
with arbitrary self-adjoint boundary conditions, where $q\in L^{1}(0,1)$ is a real-valued function.
The operator $L(q)$ is  self-adjoint and its spectrum is discrete,
real and bounded from below. We suppose that the eigenvalues of the operator $L(q)$ consist of the sequence $\{\lambda_{n}\}_{n\geq 1}$ $(\lambda_{n}\leq\lambda_{n+1},\lim_{n\rightarrow\infty}\lambda_{n}=+\infty)$ (counting with multiplicities) and $\{y_{n}\}_{n\geq 1}$ denotes the corresponding normalized eigenfunctions of the operator $L(q)$.

Let us consider another known and fixed operator $\tilde{L}:=L(\tilde{q})$ of the same domain with $L:=L(q)$ but with different potential $\tilde{q}\in L^{1}(0,1)$. Denote by $\{\tilde{\lambda}_{n}\}_{n\geq 1}$ and $\{\tilde{y}_{n}\}_{n\geq 1}$ eigenvalues
and normalized eigenfunctions of the operator $\tilde{L}$, respectively.

In \cite{Yurko2013}, Yurko provided the following generalization of Theorem \ref{yurk1} on wide classes of self-adjoint differential operators. Here $(.,.)$ denotes inner product in $L^{2}(0, 1)$ and $\hat{q}:=q-\tilde{q}$.  

\begin{theorem} \label{yurkofirst}
  Let \begin{equation}\nonumber
  \lambda_{1}-\tilde{\lambda}_{1}=(\hat{q} \tilde{y}_{1},\tilde{y}_{1}),
  \end{equation}
  where    $\tilde{y}_{1}$ is a normalized eigenfunction of $\tilde{L}$ related to the first eigenvalue $\tilde{\lambda}_{1}$. Then $q(x)=\tilde{q}(x)+ \lambda_{1}-\tilde{\lambda}_{1}$ a.e. on $(0,1)$.
\end{theorem}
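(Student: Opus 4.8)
The plan is to exploit the variational (Rayleigh--Ritz) characterization of the lowest eigenvalue together with the fact that $L$ and $\tilde{L}$ differ only by the multiplication operator $\hat{q}$. Since both operators are self-adjoint, bounded below with discrete spectrum, and share the same domain $D$, the smallest eigenvalue of $L$ admits the description
\begin{equation}\nonumber
\lambda_{1}=\min_{u\in D,\ (u,u)=1}(Lu,u),
\end{equation}
the minimum being attained precisely on the eigenspace of $\lambda_{1}$. Moreover, because $L$ and $\tilde{L}$ have the same domain and boundary conditions, for every $u\in D$ one has $Lu-\tilde{L}u=\hat{q}\,u$ pointwise, and hence $(Lu,u)-(\tilde{L}u,u)=(\hat{q}\,u,u)$, the $-u''$ contributions cancelling identically.

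First I would insert the trial function $u=\tilde{y}_{1}$, which is admissible since $\tilde{y}_{1}\in D$ and $(\tilde{y}_{1},\tilde{y}_{1})=1$. This yields
\begin{equation}\nonumber
(L\tilde{y}_{1},\tilde{y}_{1})=(\tilde{L}\tilde{y}_{1},\tilde{y}_{1})+(\hat{q}\,\tilde{y}_{1},\tilde{y}_{1})=\tilde{\lambda}_{1}+(\hat{q}\,\tilde{y}_{1},\tilde{y}_{1}),
\end{equation}
and therefore, by the variational principle, $\lambda_{1}\le\tilde{\lambda}_{1}+(\hat{q}\,\tilde{y}_{1},\tilde{y}_{1})$. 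The hypothesis $\lambda_{1}-\tilde{\lambda}_{1}=(\hat{q}\,\tilde{y}_{1},\tilde{y}_{1})$ says exactly that this inequality is an equality, so the Rayleigh quotient of $L$ attains its minimal value $\lambda_{1}$ at $\tilde{y}_{1}$.

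The next step is to read off the equality case. Expanding $\tilde{y}_{1}$ in an orthonormal eigenbasis $\{y_{n}\}$ of $L$ and using $(L\tilde{y}_{1},\tilde{y}_{1})=\sum_{n}\lambda_{n}\lvert(\tilde{y}_{1},y_{n})\rvert^{2}\ge\lambda_{1}$, equality forces $(\tilde{y}_{1},y_{n})=0$ whenever $\lambda_{n}>\lambda_{1}$; thus $\tilde{y}_{1}$ lies in the $\lambda_{1}$-eigenspace, i.e.\ $L\tilde{y}_{1}=\lambda_{1}\tilde{y}_{1}$. Consequently $\tilde{y}_{1}$ solves both $-\tilde{y}_{1}''+q\,\tilde{y}_{1}=\lambda_{1}\tilde{y}_{1}$ and $-\tilde{y}_{1}''+\tilde{q}\,\tilde{y}_{1}=\tilde{\lambda}_{1}\tilde{y}_{1}$; subtracting the two equations gives
\begin{equation}\nonumber
\hat{q}(x)\,\tilde{y}_{1}(x)=(\lambda_{1}-\tilde{\lambda}_{1})\,\tilde{y}_{1}(x)\qquad\mbox{a.e.}
\end{equation}

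Finally I would divide by $\tilde{y}_{1}$ off its zero set. Because $\tilde{y}_{1}$ is a nontrivial solution of a second-order linear equation with $L^{1}$ coefficients, the uniqueness theorem for the initial value problem prevents $\tilde{y}_{1}$ and $\tilde{y}_{1}'$ from vanishing simultaneously; hence the zeros of $\tilde{y}_{1}$ are isolated and form a set of measure zero. Therefore $\hat{q}(x)=\lambda_{1}-\tilde{\lambda}_{1}$ a.e., that is $q(x)=\tilde{q}(x)+\lambda_{1}-\tilde{\lambda}_{1}$ a.e.\ on $(0,1)$, as claimed. I expect the main obstacle to be justifying the equality case of the variational principle rigorously in the present generality---only $q,\tilde{q}\in L^{1}$ and arbitrary self-adjoint, possibly coupled, boundary conditions---where one must verify that the two form domains genuinely coincide and that the minimum characterization together with its equality condition remain valid; by comparison, the measure-zero property of the zero set of $\tilde{y}_{1}$ is a routine consequence of ODE uniqueness.
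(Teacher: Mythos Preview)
Your argument is correct and follows exactly the route the paper indicates: the paper does not spell out a proof of this theorem but only remarks that ``the proof of this theorem is based on the well-known variational principle of the smallest eigenvalue,'' citing Yurko. Your proposal supplies precisely those details---using $\tilde{y}_{1}$ as a trial function, reading off the equality case to conclude $L\tilde{y}_{1}=\lambda_{1}\tilde{y}_{1}$, and then dividing by $\tilde{y}_{1}$ off its (measure-zero) zero set---so the approaches coincide. One minor simplification: since $\tilde{y}_{1}$ is the ground state, Sturm oscillation actually gives that $\tilde{y}_{1}$ has no zeros in $(0,1)$, so the final division step is even more direct than you state.
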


Note that the proof of this theorem is based on the well-known variational principle of the smallest eigenvalue.

Recently Ashrafyan \cite{Ashrafyan2018} proved the following another generalization of the first eigenvalue-type of Ambarzumyan theorem for Sturm-Liouville problems with arbitrary self-adjoint boundary conditions.

\begin{theorem} \label{Ashrafyan}
  Let \begin{equation}\nonumber
  \lambda_{1}-\tilde{\lambda}_{1}=ess \inf\hat{q} \quad \text{or}\quad \lambda_{1}-\tilde{\lambda}_{1}=ess \sup\hat{q},
  \end{equation}
  where    $\tilde{\lambda}_{1}$ is the first eigenvalue $\tilde{L}$. Then $q(x)=\tilde{q}(x)+ \lambda_{1}-\tilde{\lambda}_{1}$ a.e. on $(0,1)$.
\end{theorem}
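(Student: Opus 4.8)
The plan is to exploit the variational (Rayleigh) characterization of the smallest eigenvalue together with the fact that $L$ and $\tilde{L}$ act on a common domain and differ only by the multiplication operator $\hat{q}$. First I would record the elementary identity $(Lu,u)=(\tilde{L}u,u)+(\hat{q}u,u)$, valid for every $u$ in the common domain, which is just $L=\tilde{L}+\hat{q}$. The minimum principle $\lambda_{1}=\min\{(Lu,u):\|u\|=1\}$ then gives, upon inserting the normalized first eigenfunction $\tilde{y}_{1}$ of $\tilde{L}$ as a trial function, the one-sided estimate $\lambda_{1}\le\tilde{\lambda}_{1}+(\hat{q}\tilde{y}_{1},\tilde{y}_{1})$, i.e. $\lambda_{1}-\tilde{\lambda}_{1}\le(\hat{q}\tilde{y}_{1},\tilde{y}_{1})$. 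Since $\|\tilde{y}_{1}\|=1$, the number $(\hat{q}\tilde{y}_{1},\tilde{y}_{1})=\int_{0}^{1}\hat{q}\,|\tilde{y}_{1}|^{2}\,dx$ is an average of $\hat{q}$ against the probability density $|\tilde{y}_{1}|^{2}$, so it lies between $\mathrm{ess\,inf}\,\hat{q}$ and $\mathrm{ess\,sup}\,\hat{q}$.

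Next I would treat the hypothesis $\lambda_{1}-\tilde{\lambda}_{1}=\mathrm{ess\,sup}\,\hat{q}$. Combining it with the estimate above yields $\mathrm{ess\,sup}\,\hat{q}\le(\hat{q}\tilde{y}_{1},\tilde{y}_{1})\le\mathrm{ess\,sup}\,\hat{q}$, so every inequality is an equality and $\int_{0}^{1}\bigl(\mathrm{ess\,sup}\,\hat{q}-\hat{q}\bigr)\,|\tilde{y}_{1}|^{2}\,dx=0$. The integrand is a product of two nonnegative factors, hence vanishes a.e.: $\bigl(\mathrm{ess\,sup}\,\hat{q}-\hat{q}\bigr)\,|\tilde{y}_{1}|^{2}=0$ a.e. The decisive point is then that $\tilde{y}_{1}$ is nonzero almost everywhere, and this is where I expect the main work to lie. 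It is handled by a soft argument: $\tilde{y}_{1}$ is a nontrivial solution of $-y''+\tilde{q}y=\tilde{\lambda}_{1}y$ with $\tilde{q}\in L^{1}(0,1)$, so $\tilde{y}_{1}\in W^{2,1}\subset C^{1}$, and by uniqueness for the initial value problem $\tilde{y}_{1}$ and $\tilde{y}_{1}'$ cannot vanish simultaneously; hence the zeros of $\tilde{y}_{1}$ are isolated, form a finite set in $[0,1]$, and have Lebesgue measure zero. Consequently $\hat{q}=\mathrm{ess\,sup}\,\hat{q}$ a.e., that is, $q=\tilde{q}+(\lambda_{1}-\tilde{\lambda}_{1})$ a.e.

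Finally, the case $\lambda_{1}-\tilde{\lambda}_{1}=\mathrm{ess\,inf}\,\hat{q}$ I would not redo from scratch but would instead obtain by symmetry, interchanging the roles of $L$ and $\tilde{L}$. Writing $\check{q}:=\tilde{q}-q=-\hat{q}$, one has $\mathrm{ess\,sup}\,\check{q}=-\mathrm{ess\,inf}\,\hat{q}$ and $\tilde{\lambda}_{1}-\lambda_{1}=\mathrm{ess\,sup}\,\check{q}$, so the already proved supremum case applied to the pair $(\tilde{L},L)$ uses the first eigenfunction $y_{1}$ of $L$ as trial function and yields $\check{q}=\mathrm{ess\,sup}\,\check{q}$ a.e., which is exactly $q=\tilde{q}+(\lambda_{1}-\tilde{\lambda}_{1})$ a.e.

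The only genuinely delicate step is the almost-everywhere nonvanishing of the first eigenfunction: for classical separated conditions one could invoke nodelessness of the ground state, but under arbitrary self-adjoint boundary conditions the cleaner and fully general route is the isolated-zeros property of nontrivial second-order solutions described above, which makes no use of nodelessness and is insensitive both to the choice of boundary conditions and to a possible multiplicity of $\lambda_{1}$.
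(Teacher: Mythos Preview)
Your argument is correct. The paper does not supply its own proof of this theorem but describes Ashrafyan's original proof as resting on the variational characterization of the smallest eigenvalue together with the Sturm oscillation fact that the \emph{first} eigenfunction has no zeros on $(0,1)$. Your route uses the same variational inequality $\lambda_{1}-\tilde\lambda_{1}\le(\hat q\,\tilde y_{1},\tilde y_{1})$, but you replace nodelessness by the isolated-zeros property of nontrivial solutions of a second-order linear ODE. That substitution is precisely the device the paper itself introduces to prove its main extension, Theorem~\ref{main}, where nodelessness is unavailable for $n>1$; so you have in effect anticipated the paper's key observation while proving the cited result. Your symmetry reduction for the $\mathrm{ess\,inf}$ case, swapping $L$ and $\tilde L$ and using $y_{1}$ as the trial function, is a clean shortcut not spelled out in the paper.
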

As in the proof of Theorem \ref{yurkofirst}, the above uniqueness theorem is also provided by using the property of the smallest eigenvalue. That is, the proof of Theorem \ref{Ashrafyan} is based on the Sturm oscillation theorem that the first eigenfunction has no zeros on interval $(0,1)$. 
\section{Main results}

The main result of this paper is as follows. Note that, to obtain the following extension theorem, it is enough to have information about the arbitrary eigenvalue $\lambda_{n}$ instead of the first eigenvalue only.
\begin{theorem} \label{main}
  Let, for some $n$,
  \begin{equation}\nonumber
  \lambda_{n}-\tilde{\lambda}_{n}=(\hat{q} \tilde{y}_{n},\tilde{y}_{n})
  \end{equation}
  and
  \begin{equation}\nonumber
  \lambda_{n}-\tilde{\lambda}_{n}=ess \inf\hat{q} \quad \text{or}\quad \lambda_{n}-\tilde{\lambda}_{n}=ess \sup\hat{q},
  \end{equation}
  where    $\tilde{y}_{n}$ is a normalized eigenfunction of $\tilde{L}$ related to the eigenvalue $\tilde{\lambda}_{n}$. Then $q(x)=\tilde{q}(x)+ \lambda_{n}-\tilde{\lambda}_{n}$ a.e. on $(0,1)$.
\end{theorem}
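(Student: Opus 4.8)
The plan is to fold the two hypotheses into one integral identity whose integrand is sign-definite, and then to strip off the eigenfunction weight using the fact that an eigenfunction is nonzero almost everywhere. Set $c := \lambda_n - \tilde{\lambda}_n$. Because $\tilde{y}_n$ is normalized, $\int_0^1 |\tilde{y}_n(x)|^2\,dx = 1$, so the first hypothesis says precisely that
\begin{equation}\nonumber
c = (\hat{q}\tilde{y}_n, \tilde{y}_n) = \int_0^1 \hat{q}(x)\,|\tilde{y}_n(x)|^2\,dx ;
\end{equation}
that is, $c$ equals the average of $\hat{q}$ against the probability density $|\tilde{y}_n|^2$.

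Next I would invoke the second hypothesis. Suppose first that $c = ess \inf\hat{q}$. Writing $c = c\int_0^1 |\tilde{y}_n|^2\,dx$ and subtracting from the previous display, I obtain
\begin{equation}\nonumber
\int_0^1 \bigl(\hat{q}(x) - ess \inf\hat{q}\bigr)\,|\tilde{y}_n(x)|^2\,dx = 0 .
\end{equation}
By the definition of the essential infimum the factor $\hat{q} - ess \inf\hat{q}$ is nonnegative a.e., and $|\tilde{y}_n|^2 \geq 0$, so the integrand is nonnegative and must vanish a.e.:
\begin{equation}\nonumber
\bigl(\hat{q}(x) - ess \inf\hat{q}\bigr)\,|\tilde{y}_n(x)|^2 = 0 \quad \text{a.e. on } (0,1).
\end{equation}
The case $c = ess \sup\hat{q}$ is identical after replacing $\hat{q} - ess \inf\hat{q}$ by the nonnegative quantity $ess \sup\hat{q} - \hat{q}$.

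The crucial---and really the only nontrivial---step is to remove the factor $|\tilde{y}_n|^2$, i.e. to show $\tilde{y}_n(x) \neq 0$ for a.e. $x \in (0,1)$. For this I would appeal to the uniqueness theorem for the initial value problem associated with (\ref{q-per}): since $\tilde{q} \in L^1(0,1)$, every solution is $C^1$ with absolutely continuous derivative, and a solution vanishing together with its derivative at one point is identically zero. Thus if $\tilde{y}_n(x_0) = 0$ then $\tilde{y}_n'(x_0) \neq 0$, so every zero of $\tilde{y}_n$ is simple and hence isolated; the zero set is therefore discrete and in particular Lebesgue-null. (If $\tilde{y}_n$ is complex-valued one applies this to a nontrivial real or imaginary part, whose already-null zero set contains $\{|\tilde{y}_n| = 0\}$.) This is exactly what lets the argument run for an arbitrary index $n$: Theorems \ref{yurkofirst} and \ref{Ashrafyan} use that the first eigenfunction has \emph{no} zeros, whereas here it suffices that the zero set be null, which holds for every eigenfunction.

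With $|\tilde{y}_n(x)|^2 > 0$ a.e. in hand, the vanishing of the integrand forces $\hat{q}(x) = ess \inf\hat{q} = c$ a.e. (respectively $\hat{q} = ess \sup\hat{q} = c$), i.e. $q(x) - \tilde{q}(x) = \lambda_n - \tilde{\lambda}_n$ a.e., which is the asserted conclusion. I expect the main obstacle to be expository rather than computational---namely stating the regularity and uniqueness input for $L^1$ potentials carefully enough to justify ``isolated zeros''---since the remaining content is just the elementary fact that a weighted average attaining the extreme value of the integrand forces the integrand to be constant wherever the weight is positive.
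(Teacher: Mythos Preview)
Your proof is correct and follows essentially the same route as the paper's: rewrite the first hypothesis as $\int_0^1(\hat{q}-c)\,\tilde{y}_n^{\,2}\,dx=0$, invoke the second hypothesis to make the integrand sign-definite, and then cancel the weight using that the eigenfunction's zero set has measure zero. You are in fact more explicit than the paper, which neither spells out where the $ess\inf/ess\sup$ condition enters nor justifies why the zeros of $\tilde{y}_n$ are isolated.
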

\begin{proof}
It follows from the first assumption that
\begin{equation}
((\hat{q}+\tilde{\lambda}_{n}- \lambda_{n}) \tilde{y}_{n},\tilde{y}_{n})=0.
\end{equation}
Since $\tilde{y}_{n}$ is a normalized eigenfunctions of the operator $\tilde{L}$ corresponding to the eigenvalue $\tilde{\lambda}_{n}$, we obtain that $\tilde{y}_{n}$ has at most finitely many isolated zero points in $(0,1)$. Therefore, the measure of the set of zero points is $0$. Hence, using this and $(\hat{q}+\tilde{\lambda}_{n}- \lambda_{n}) \tilde{y}_{n}^{2}\in  L^{1}(0,1)$, we get $\hat{q}=\lambda_{n}-\tilde{\lambda}_{n}$ a.e. on $(0,1)$.
\end{proof}

From Theorem \ref{main}, one can easily verify the following assertion. 
\begin{theorem} \label{main1}
  Let the assumptions of Theorem \ref{main} be valid and let $\int_0^1 q(x)dx=\int_0^1 \tilde{q}(x)dx$. Then $q(x)=\tilde{q}(x)$ a.e. on $(0,1)$.
\end{theorem}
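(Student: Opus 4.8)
The plan is to derive the conclusion directly from Theorem~\ref{main}, whose hypotheses are assumed here, by integrating the identity it provides and exploiting the equal-integral condition. Since Theorem~\ref{main} yields $q(x)=\tilde{q}(x)+\lambda_{n}-\tilde{\lambda}_{n}$ a.e.\ on $(0,1)$, the difference $\hat{q}=q-\tilde{q}$ is equal to the \emph{constant} $\lambda_{n}-\tilde{\lambda}_{n}$ almost everywhere. The whole argument then reduces to showing that this constant must vanish.

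First I would invoke Theorem~\ref{main} to write $\hat{q}(x)=\lambda_{n}-\tilde{\lambda}_{n}$ a.e.\ on $(0,1)$. Next I would integrate this relation over the interval: because the right-hand side is constant and the interval has length $1$, I obtain
\begin{equation}\nonumber
\int_{0}^{1}\hat{q}(x)\,dx=\int_{0}^{1}\bigl(\lambda_{n}-\tilde{\lambda}_{n}\bigr)\,dx=\lambda_{n}-\tilde{\lambda}_{n}.
\end{equation}
On the other hand, the additional hypothesis $\int_{0}^{1}q(x)\,dx=\int_{0}^{1}\tilde{q}(x)\,dx$ is precisely the statement that $\int_{0}^{1}\hat{q}(x)\,dx=0$. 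Comparing the two expressions forces $\lambda_{n}-\tilde{\lambda}_{n}=0$.

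Finally, substituting $\lambda_{n}-\tilde{\lambda}_{n}=0$ back into the conclusion of Theorem~\ref{main} gives $q(x)=\tilde{q}(x)$ a.e.\ on $(0,1)$, as required. I do not anticipate any genuine obstacle here: the entire content is carried by Theorem~\ref{main}, and what remains is a one-line integration combined with the normalization condition. The only point deserving a moment's care is the observation that integrating a constant over an interval of length exactly $1$ returns that same constant, which is what makes the equal-integral hypothesis translate cleanly into the vanishing of $\lambda_{n}-\tilde{\lambda}_{n}$.
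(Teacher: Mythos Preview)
Your proof is correct and is precisely the easy verification the paper has in mind: the paper does not write out a proof but simply remarks that the assertion follows from Theorem~\ref{main}, which is exactly what you do by integrating the identity $\hat{q}=\lambda_{n}-\tilde{\lambda}_{n}$ and using the equal-integral hypothesis to force the constant to vanish.
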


\subsection{Example}
Let us give an example to illustrate Theorem \ref{main}.

Consider the Dirichlet boundary value problem
\begin{equation}  \label{dirich}
-y^{\prime\prime}(x)+q(x)y(x)=\lambda y(x),\qquad y(0)=y(1)=0.
\end{equation}
Let $\tilde{q}(x)\equiv 0$. Then $\tilde{\lambda}_{n}=(n\pi)^2$, $\tilde{y}_{n}=\sqrt{2}\sin n\pi x$ for some $n\geq 1$ and Theorem \ref{main} implies the following assertion.
\begin{corollary} \label{co1}
  Let, for some $n$,
  \begin{equation}\label{c1}
  \lambda_{n}-(n\pi)^2=2\int_{0}^1 q(x)\sin^{2} n\pi x\, dx
  \end{equation}
  and
  \begin{equation}\nonumber
  \lambda_{n}-(n\pi)^2=ess \inf q \quad \text{or}\quad \lambda_{n}-(n\pi)^2=ess \sup q.
  \end{equation}
   Then $q(x)=\lambda_{n}-(n\pi)^2$ a.e. on $(0,1)$.
\end{corollary}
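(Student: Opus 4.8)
The plan is to derive Corollary~\ref{co1} as a direct specialization of Theorem~\ref{main} to the Dirichlet problem with zero comparison potential. First I would verify that the stated data for $\tilde{L}$ are correct: taking $\tilde{q}\equiv 0$ in \eqref{dirich}, the eigenvalues of $-y''=\lambda y$ with $y(0)=y(1)=0$ are indeed $\tilde{\lambda}_{n}=(n\pi)^{2}$ with normalized eigenfunctions $\tilde{y}_{n}(x)=\sqrt{2}\sin n\pi x$, since $\int_{0}^{1} 2\sin^{2}n\pi x\,dx=1$. With $\hat{q}=q-\tilde{q}=q$, the inner-product hypothesis of Theorem~\ref{main} reads $(\hat{q}\,\tilde{y}_{n},\tilde{y}_{n})=\int_{0}^{1} q(x)\,2\sin^{2}n\pi x\,dx$, which is precisely the right-hand side of \eqref{c1}, and $\operatorname{ess\,inf}\hat{q}=\operatorname{ess\,inf}q$, $\operatorname{ess\,sup}\hat{q}=\operatorname{ess\,sup}q$.

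Next I would check that the two displayed hypotheses of the corollary coincide term-by-term with the two hypotheses of Theorem~\ref{main} under these substitutions. Condition \eqref{c1} is exactly the first assumption $\lambda_{n}-\tilde{\lambda}_{n}=(\hat{q}\,\tilde{y}_{n},\tilde{y}_{n})$, and the second displayed condition is exactly $\lambda_{n}-\tilde{\lambda}_{n}=\operatorname{ess\,inf}\hat{q}$ or $\lambda_{n}-\tilde{\lambda}_{n}=\operatorname{ess\,sup}\hat{q}$. Having matched all hypotheses, Theorem~\ref{main} applies and yields $q(x)=\tilde{q}(x)+\lambda_{n}-\tilde{\lambda}_{n}=\lambda_{n}-(n\pi)^{2}$ a.e.\ on $(0,1)$, which is the claimed conclusion.

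The only point requiring care, and the one I would single out as the main (mild) obstacle, is the implicit use in Theorem~\ref{main} that $\tilde{y}_{n}$ has a zero set of measure zero. For the sine eigenfunctions $\tilde{y}_{n}=\sqrt{2}\sin n\pi x$ this is transparent, since $\sin n\pi x$ vanishes only at the finitely many points $x=k/n$ in $[0,1]$; hence the argument of Theorem~\ref{main} goes through without the genericity subtleties that could arise for more general self-adjoint boundary conditions. Thus the proof reduces to recording the substitutions above and invoking Theorem~\ref{main}, and I expect no genuine difficulty beyond verifying the normalization constant and the elementary zero structure of $\sin n\pi x$.
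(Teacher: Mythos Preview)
Your proposal is correct and matches the paper's approach exactly: the corollary is stated as an immediate specialization of Theorem~\ref{main} with $\tilde{q}\equiv 0$ in the Dirichlet problem, so that $\tilde{\lambda}_{n}=(n\pi)^{2}$, $\tilde{y}_{n}=\sqrt{2}\sin n\pi x$, and $\hat{q}=q$. Your additional remark on the finite zero set of $\sin n\pi x$ is harmless extra detail that the paper does not spell out, since this is already absorbed into the proof of Theorem~\ref{main}.
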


And, Theorem \ref{main1} implies the following assertion. The assertion makes a contribution to the P{\"{o}}schel-Trubowitz inverse spectral theory.
\begin{corollary} \label{co2}
  Let the assumptions of Corollary \ref{co1} be valid and let $\int_0^1 q(x)dx=0$. Then $q(x)=0$ a.e. on $(0,1)$.
\end{corollary}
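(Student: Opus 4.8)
The plan is to obtain Corollary \ref{co2} directly from Corollary \ref{co1}, using the extra normalization $\int_0^1 q(x)\,dx = 0$ only to fix a single constant. Since the hypotheses of Corollary \ref{co2} are by definition exactly those of Corollary \ref{co1}, that corollary applies verbatim and already yields the strong conclusion that $q$ agrees almost everywhere with the constant $\lambda_n - (n\pi)^2$; in other words $q(x) = \lambda_n - (n\pi)^2$ a.e.\ on $(0,1)$. The whole remaining task is therefore to show that this constant must vanish.

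To do this I would integrate the a.e.\ identity $q(x) = \lambda_n - (n\pi)^2$ over the unit interval. Because $q \in L^1(0,1)$ and the interval has length one, replacing $q$ by its almost-everywhere constant value leaves the integral unchanged and gives $\int_0^1 q(x)\,dx = \lambda_n - (n\pi)^2$. Comparing this with the hypothesis $\int_0^1 q(x)\,dx = 0$ forces $\lambda_n - (n\pi)^2 = 0$, and substituting back into the identity of the first step delivers $q(x) = 0$ a.e.\ on $(0,1)$, as claimed. Equivalently, one may view the statement as the special case $\tilde{q} \equiv 0$ of Theorem \ref{main1}, in which case $\lambda_n = \tilde{\lambda}_n = (n\pi)^2$ and the same conclusion is immediate.

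I do not expect any genuine obstacle here: all of the analytic content, namely the zero-set argument underlying Theorem \ref{main} and hence Corollary \ref{co1}, has already been spent, and the present statement merely removes the free additive constant. The only point worth stating carefully is that ``constant a.e.'' legitimately lets one evaluate the average and pull the value out of the integral, which is ensured by the $L^1$-integrability of $q$; beyond that, the argument is a one-line elimination of the constant $\lambda_n - (n\pi)^2$.
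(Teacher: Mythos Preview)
Your proposal is correct and matches the paper's own treatment: the paper simply states that Corollary~\ref{co2} follows from Theorem~\ref{main1} (with $\tilde q\equiv 0$), which is exactly the alternative route you note at the end, and your primary argument via Corollary~\ref{co1} is the same idea unpacked one step further. No gap is present.
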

\begin{remark}
In \cite{Poschel}, P{\"{o}}schel and Trubowitz showed that, for the Dirichlet problem, there is an infinite dimensional set of $L^{2}(0, 1)$ potentials with the same Dirichlet zero spectrum $\sigma:=\{(n\pi)^{2}: n\in \mathbb{N}\}$ as $q=0$ a.e. That is, if the spectrum is $\sigma$, then the potential is not necessarily zero. Thus Ambarzumyan's theorem is not valid.
\end{remark}
Arguing as in the paper \cite{chern2001} (see p. 337), from P{\"{o}}schel and Trubowitz \cite{Poschel}, an isospectral $L^{2}(0, 1)$ potential can be written in the form $q=u+e(u)$, where $u$ is odd part and $e$ is even part which can be uniquely expressed as a function of its odd part ($e=e(u)$).

Let $q$ be an isospectral potential with the Dirichlet zero spectrum $\sigma=\{(n\pi)^{2}: n\in \mathbb{N}\}$. Then  $\int_0^1 q(x)dx=0$ and from (\ref{c1}), we get for some $n$
\begin{equation}\label{zero}
0=2\int_{0}^1 q(x)\sin^{2} n\pi x\, dx=\int_0^1 q(x)dx-\int_0^1 q(x)\cos 2n\pi x\,dx.
  \end{equation}
Hence, for some $n$,
\begin{equation}\nonumber
\int_0^1 q(x)\cos 2n\pi x\,dx=0
  \end{equation}
which implies that the n-th even Fourier coefficient $a_n$ has the following equalities
\begin{equation}\nonumber
a_n=\int_0^1 q(x)\cos 2n\pi x\,dx=\int_0^1 e(u)\cos 2n\pi x\,dx=0.
  \end{equation}
Then $a_n=0$ for all $n$. Similarly, all the odd Fourier coefficients vanish. Therefore, the even part of an isospectral potential does not vanish. Consequently, from Corollary \ref{co2}, for the isospectral potential $q$ (i.e. $q\neq 0$), we get, for all $n$,
\begin{equation}\nonumber
2\int_{0}^1 q(x)\sin^{2} n\pi x\, dx\neq 0,
  \end{equation}
that is, we get from (\ref{zero}), for all $n$, 
\begin{equation}\nonumber
a_n=\int_{0}^1 q(x)\cos 2n\pi x\, dx\neq 0.
  \end{equation}
Thus, the present paper supplements the P{\"{o}}schel-Trubowitz inverse spectral theory.


\end{document}